\title{Identifying Causal Effects with Computer Algebra} 
\author{ {\bf Luis D. Garc\'{\i}a-Puente} \\  
 Dept.~of Math.~and Stats. \\  
 Sam Houston State University\\ 
 Huntsville, TX 77341 \\ 
 \href{mailto:lgarcia@shsu.edu}{lgarcia@shsu.edu}\\
 \And 
 {\bf Sarah Spielvogel}  \\ 
  Dept.~of Math.~and Stats.  \\ 
 Sam Houston State University\\ 
 Huntsville, TX 77341 \\ 
 \href{mailto:sxs013@shsu.edu}{sxs013@shsu.edu}\\
 \And 
 {\bf Seth Sullivant}   \\ 
 Department of Mathematics \\          
 North Carolina State University    \\           
Raleigh, NC 27695\\ 
 \href{mailto:smsulli2@ncsu.edu}{smsulli2@ncsu.edu}} 
\theoremstyle{plain}
\newtheorem{thm}{Theorem}
\newtheorem{prop}[thm]{Proposition}
\newtheorem*{thm*}{Theorem}
\newtheorem*{lemma*}{Lemma}
\newtheorem*{prop*}{Proposition}
\newtheorem*{cor*}{Corollary}
\newtheorem*{conj*}{Conjecture}
\theoremstyle{definition}
\newtheorem{ex}[thm]{Example}
\theoremstyle{remark}
\newcommand{\nn}{\mathbb{N}}
\newcommand{\rr}{\mathbb{R}}
\newcommand{\bfa}{\mathbf{a}}
\newcommand{\bff}{\mathbf{f}}
\newcommand{\bfp}{\mathbf{p}}
\newcommand{\bft}{\mathbf{t}}
\newcommand{\bfu}{\mathbf{u}}
\newcommand{\bfv}{\mathbf{v}}
\newcommand{\bfw}{\mathbf{w}}
\newcommand{\cg}{\mathcal{G}}
\newcommand{\ci}{\mathcal{I}}
\newcommand{\cn}{\mathcal{N}}
\newcommand{\cp}{\mathcal{P}}
\newcommand{\bi}{\leftrightarrow}
\begin{document} 
 
\maketitle 
 
\begin{abstract} 
The long-standing identification problem for causal effects in graphical models has many partial results but lacks a systematic study.  We show how computer algebra can be used to either prove that a causal effect can be identified, generically identified, or show that the effect is not generically identifiable.   We report on the results of our computations for linear structural equation models, where we determine precisely which causal effects are generically identifiable for all graphs on three and four vertices.
\end{abstract} 

\section{INTRODUCTION}

Consider a parametric statistical model $p_\bullet : \Theta \rightarrow P(X)$, that associates to a parameter vector $\theta \in \Theta$ a probability density $p_\theta(x)$ on the sample space $X$.  Let $s: \Theta \rightarrow \mathbb{R}$ be a parameter of interest.  The identification problem asks:  Does there exist a function $\Phi$ from the model $p_\Theta =  \{ p_\theta : \theta \in \Theta \}$ to $\mathbb{R}$ such that $\Phi \circ p_\theta  =  s(\theta)$ for all $\theta \in \Theta$?  If there is such a function, then the parameter $s(\theta)$ is said to be identifiable (and $\Phi$ is the identification formula), and if there is no such function the parameter is not identifiable.  The main focus of this paper is on generically identifiable parameters (i.e. almost everywhere identifiable), which are identifiable expect possibly on a set of measure zero.

We will describe a general framework using computer algebra for addressing such questions.  Our perspective is that, in most problems of interest in machine learning, both the map $p_\bullet$ that associates a density to a parameter vector, and the parameter of interest $s(\theta)$ are polynomial (or rational) functions of the parameters $\theta$.  In this case, if the parameter is (generically) identifiable, the identification formula must be, at worst, an algebraic function of the probability distribution, and such an algebraic identification formula can be detected or proven not to exist using Gr\"obner basis computations.  Some past work on using computer algebra for the identifiability problem includes \citep{Geiger1999} and \citep{Merckens94}.

Our motivation for laying down this general framework is to provide a systematic study of the 
 identifiability of direct and indirect causal effects in causal graphical models.  We provide a systematic study of the generic identifiability of linear structural equation models (SEMs) using our general framework in Section 4.  This problem has been much studied in the literature of machine learning, graphical models, statistics, econometrics, etc.  There exist many different graphical criteria that guarantee that some particular causal effects can be identified or generically identified including the ``single-door'' and ``instrumental variables'' criteria for direct effects; the ``back-door'' criterion for total effects \citep[see][]{Pearl00}; the ``G-criterion'' \citep{Brito06}; the various criteria introduced by \citet{Tian04, Tian05, Tian09}.  Other references among many include: \citep{Fisher66}, \citep{Kuroki99}, \citep{Robins87}, and \citep{Simon53}.

\citet{Tian02} gave an algorithm, proven complete by \citet{Shpitser06} and \citet{Huang06} for identification of parameters in non-parametric structural equation models (that is, an algorithm which decided identifiability depending only on the type of graph).  While there are many conditions that exist for generic identifiability, there is no known necessary and sufficient condition to decide generic identifiability in either the general nonparametric case, or in specific situations (e.g.~linear SEMs).     One goal in this paper is to provide a systematic study to classify the linear SEMs on small numbers of variables whose parameters are generically identifiable.

In the next section, we describe the general algebraic framework for performing identifiability computations.  In Section 3, we describe the problem for Gaussian structural equation models, and give examples of code that shows how to perform the computations from Section 2 for these models.  In Section 4 we report on the results of our computations.

\section{COMPUTER ALGEBRA FOR PARAMETER IDENTIFICATION}\label{section:comp-alg}

In this section, we describe the general framework for addressing identifiability problems using computer algebra.  We refer the reader to \citep{Cox07} for background on computer algebra, ideals, and related topics which are used in this section.

Let $\Theta \subseteq \mathbb{R}^d$ be a full dimensional parameter set.  In most applications $\Theta$ is a convex subset of $\rr^d$.  Let $\rr[\bft]:= \rr[t_1,\ldots, t_d]$ denote the set of all polynomials in the indeterminates (i.e. polynomial variables) $t_1, t_2, \ldots, t_d$ with real coefficients.  The set $\rr[\bft]$ is called the polynomial ring, a \emph{ring} being an algebraic structure with compatible addition and multiplication operations.  Let $f_1, \ldots, f_n \in \rr[\bft]$ be polynomials.  These polynomials define a function $\bff : \Theta \rightarrow \rr^n$ by $\bff(\theta) = (f_1(\theta),\ldots, f_n(\theta))^T$.  The image of $\bff$ is the set $\bff(\Theta) :=  \{\bff(\theta) : \theta \in \Theta \}$.

We define a parameter to be a polynomial function $s : \Theta \rightarrow \rr$ which is not constant on $\Theta$.  
The parameter $s$ is identifiable if there exists a map $\Phi: \rr^n \rightarrow \rr$ such that $s(\theta) =  \Phi \circ \bff(\theta)$ for all $\theta \in \Theta$.  Note the use of the word map, rather than function-- we do not require that $\Phi$ be defined on all of $\rr^n$, but only on $\bff(\Theta)$.  This leads to our next definition.  The parameter $s$ is \emph{generically identifiable} if there exists a map $\Phi: \rr^n \rightarrow \rr$ and a dense open subset $U$ of $\Theta$ such that $s(\theta) =  \Phi \circ \bff(\theta)$ for all $\theta \in U$.  Generic identifiability is also called almost everywhere identifiability in the literature.

An important special case concerns the coordinate functions $s(\theta) = \theta_i$.  If all these functions are (generically) identifiable there exists a (generic) inverse map to the function $\bff$, in which case every parameter is (generically) identifiable.

\begin{ex} \label{ex:instrument}
Let $\theta =  (\omega_{11}, \omega_{22}, \omega_{23}, \omega_{33}, \lambda_{12}, \lambda_{23})$, and let 
$$\Theta = \left\{ (\omega_{11}, \omega_{22}, \omega_{23}, \omega_{33}, \lambda_{12}, \lambda_{23}) \in \rr^6 : \right. $$ 
$$\left. \omega_{11} > 0 , \omega_{22} > 0 , \omega_{33} > 0 , \omega_{11}\omega_{22} > \omega_{12}^2  \right\}.$$
Let $\bff: \rr^6 \rightarrow \rr^6$ given by
\begin{eqnarray*}
f_{11}(\theta) &  =  & \omega_{11} \\
f_{12}(\theta) &  =  & \omega_{11}\lambda_{12} \\
f_{13}(\theta) &  =  & \omega_{11}\lambda_{12} \lambda_{23} \\
f_{22}(\theta) &  =  & \omega_{22} + \omega_{11}\lambda_{12}^2 \\
f_{23}(\theta) &  =  & \omega_{22}\lambda_{23} + \omega_{11}\lambda_{12}^2 \lambda_{23} + \omega_{23}\\
f_{33}(\theta) & =  &  \omega_{33} + \omega_{22} \lambda_{23}^2 + \omega_{23}\lambda_{23} + \omega_{11}\lambda_{12}^2\lambda_{23}^2.
\end{eqnarray*}
Since $\omega_{11} > 0$ in $\Theta$, $\lambda_{12}$ is identifiable by the formula $\lambda_{12}  =  \frac{ f_{12}}{f_{11}}$.  On the other hand, the parameter $\lambda_{23}$ is generically identifiable by the formula $\lambda_{23} = \frac{ f_{13}}{f_{12}}$.  But this formula does not show that $\lambda_{23}$ is identifiable, because $f_{12} = 0$ for some values in $\bff(\Theta)$ in particular, whenever $\lambda_{12} = 0$.  In fact, it is not difficult to show that $\lambda_{23}$ cannot be determined whenever $\lambda_{12} = 0$, and hence this parameter is not identifiable. 
\end{ex}

To describe the setup for determining identifiability with computational algebra, we need to introduce the closely related notion of constraints.

Let $\rr[\bfp] := \rr[p_1, \ldots, p_n]$ be the polynomial ring in indeterminates $p_1, \ldots, p_n$.  The vanishing ideal (or constraint set) of $S \subseteq \rr^n$ is the set
$$
\ci(S) :=  \{  g \in \rr[\bfp] :  g(\bfa) = 0  \,\, \mbox{ for all  } \bfa \in S \}.
$$
The vanishing ideal, as the name implies, is an ideal in the ring $\rr[\bfp]$, that is, it is closed under addition and under multiplication by an arbitrary polynomial.  The simplest example of an ideal is the ideal generated by a collection of polynomials:
$$
\langle g_1, \ldots, g_r \rangle :=  \{ \sum_{i = 1}^r h_i g_i :  h_i \in \rr[\bfp] \}.
$$
Hilbert's basis theorem says that every ideal has a finite generating set; that is, every ideal can be written as the ideal generated by a finite collection of polynomials.  One of the advantages of working in the language of ideals and generating sets is that they allow for the computation of constraint sets.

\begin{prop}\label{vanishing-ideal}
Let $\bff : \Theta \subseteq \rr^d \rightarrow \rr^n$ be a polynomial parametrization from a full dimensional parameter space.  Then
$$
\ci(\bff(\Theta)) = \langle p_1 - f_1(\bft), \ldots, p_n - f_n(\bft) \rangle \cap \rr[\bfp].
$$
In particular, the constraints can be determined by eliminating the $t$-indeterminates.
\end{prop}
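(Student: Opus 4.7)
The plan is to prove the equality $\ci(\bff(\Theta)) = J \cap \rr[\bfp]$, where $J := \langle p_1 - f_1(\bft), \ldots, p_n - f_n(\bft) \rangle \subseteq \rr[\bft, \bfp]$, by establishing each inclusion separately. The inclusion $J \cap \rr[\bfp] \subseteq \ci(\bff(\Theta))$ is the easy direction: if $g \in J \cap \rr[\bfp]$, write $g = \sum_{i=1}^n h_i(\bft,\bfp)\,(p_i - f_i(\bft))$, and for any $\theta \in \Theta$ evaluate at $(\bft,\bfp) = (\theta, \bff(\theta))$. Each factor $p_i - f_i(\bft)$ then evaluates to $f_i(\theta) - f_i(\theta) = 0$, forcing $g(\bff(\theta)) = 0$.

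For the reverse inclusion, I would take $g \in \ci(\bff(\Theta))$ and consider the pullback $g(\bff(\bft)) \in \rr[\bft]$. By hypothesis this polynomial vanishes on all of $\Theta$. This is the step where full-dimensionality is used: since $\Theta$ has nonempty Euclidean interior in $\rr^d$ and a polynomial vanishing on a nonempty open subset of $\rr^d$ must be identically zero (polynomials are real-analytic, or equivalently a nonempty open set is Zariski dense), one concludes $g(\bff(\bft)) = 0$ as an element of $\rr[\bft]$. Consequently, in the larger ring $\rr[\bft,\bfp]$, we have the identity $g(\bfp) = g(\bfp) - g(\bff(\bft))$.

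The last step is to exhibit $g(\bfp) - g(\bff(\bft))$ as an explicit element of $J$. This is done by the standard telescoping argument: replace the variables $p_i$ by $f_i(\bft)$ one coordinate at a time, and in each resulting difference apply $a^k - b^k = (a-b)(a^{k-1} + a^{k-2}b + \cdots + b^{k-1})$ monomial by monomial to produce a representation
$$g(\bfp) - g(\bff(\bft)) \;=\; \sum_{i=1}^n q_i(\bft,\bfp)\,(p_i - f_i(\bft))$$
with $q_i \in \rr[\bft,\bfp]$. This shows $g \in J$, and since $g \in \rr[\bfp]$ we conclude $g \in J \cap \rr[\bfp]$. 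The main obstacle is the one genuinely analytic step, namely the passage from ``$g(\bff(\bft))$ vanishes on $\Theta$'' to ``$g(\bff(\bft))$ is the zero polynomial''; the rest of the proof is purely formal, and would fail for lower-dimensional $\Theta$, where the intersection $J \cap \rr[\bfp]$ would instead recover the vanishing ideal of the Zariski closure of $\bff(\overline{\Theta}^{\mathrm{Zar}})$ rather than that of $\bff(\Theta)$ itself.
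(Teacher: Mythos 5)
Your proof is correct, and it is exactly the standard implicitization argument: the paper itself states Proposition~\ref{vanishing-ideal} \emph{without} proof, deferring implicitly to the elimination/implicitization results in the cited Cox--Little--O'Shea, and your two inclusions (evaluation at $(\theta,\bff(\theta))$ for the easy direction; Zariski density of a full-dimensional $\Theta$ plus the telescoping substitution $p_i\mapsto f_i(\bft)$ for the hard one) are precisely that textbook proof, with the role of full-dimensionality correctly isolated. The only inaccuracy is in your closing aside: the ideal $J=\langle p_1-f_1(\bft),\ldots,p_n-f_n(\bft)\rangle$ contains no information about $\Theta$ whatsoever, so for lower-dimensional $\Theta$ the intersection $J\cap\rr[\bfp]$ is still the vanishing ideal of the Zariski closure of $\bff(\rr^d)$ (the image of the \emph{whole} ambient space), not of $\bff\bigl(\overline{\Theta}^{\mathrm{Zar}}\bigr)$; to recover the latter one would need to adjoin generators of $\ci(\Theta)$ to $J$ before eliminating. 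This does not affect the proof of the stated proposition.
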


The intersection in Proposition \ref{vanishing-ideal} can be computed using Gr\"obner bases with elimination term orders, see below.

Constraint sets can also be used to determine whether or not parameters are identifiable.  Indeed, consider the modified parametrization map $\tilde{\bff}  = (s, f_1, \ldots, f_n)^T :  \Theta \rightarrow \rr^{n+1}$.  Let $\rr[q,\bfp]$ be the polynomial ring with one extra indeterminate corresponding to the parameter function $s$.  Let $\ci(\tilde{\bff}(\Theta))$ be the vanishing ideal of the image.  Then we have the following proposition.

\begin{prop} \label{prop:identpoly}
Suppose that $g(q, \bfp) \in \ci(\tilde{\bff}(\Theta))$ is a polynomial such that $q$ appears in this polynomial, $g(q, \bfp) = \sum_{i = 0}^d g_i(\bfp) q^i$ and $g_d(\bfp)$ does not belong to $\ci(\bff(\Theta))$.
\begin{enumerate}
\item  If $g$ is linear in $q$, $g =  g_1(\bfp) q - g_0(\bfp)$ then $s$ is generically identifiable by the formula $s = \frac{g_0(\bfp)}{g_1(\bfp)}$.  If, in addition, $g_1(\bfp) \neq 0$ for $\bfp \in \bff(\Theta)$ then $s$ is identifiable.
\item  If $g$ has higher degree $d$ in $q$, then $s$ may or may not be generically identifiable.  Generically, there are at most $d$ possible choices for the parameter $s(\theta)$ given $\bff(\theta)$.
\item  If no such polynomial $g$ exists then the parameter $s$ is not generically identifiable.
\end{enumerate} 
\end{prop}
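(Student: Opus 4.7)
The plan is to exploit the single polynomial identity
\[
g(s(\theta),\bff(\theta)) \;=\; \sum_{i=0}^d g_i(\bff(\theta))\,s(\theta)^i \;=\; 0 \qquad \text{for every } \theta\in\Theta,
\]
which holds because $g\in\ci(\tilde{\bff}(\Theta))$, together with the observation that the hypothesis $g_d\notin\ci(\bff(\Theta))$ makes the polynomial $\theta\mapsto g_d(\bff(\theta))$ not identically zero on the full-dimensional $\Theta$, so that $U:=\{\theta\in\Theta : g_d(\bff(\theta))\neq 0\}$ is open and Zariski-dense.

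With this in hand, parts (1) and (2) fall out almost immediately. For (1) I would solve $g_1(\bff(\theta))\,s(\theta)=g_0(\bff(\theta))$ on $U$ to obtain the generic identification formula $\Phi(\bfp)=g_0(\bfp)/g_1(\bfp)$; if $g_1$ is moreover nowhere zero on $\bff(\Theta)$ then $\Phi$ is defined on all of $\bff(\Theta)$ and gives full identifiability. For (2), on $U$ the value $s(\theta)$ is a root of the univariate polynomial $\sum_{i=0}^d g_i(\bff(\theta))\,q^i$ whose leading coefficient is nonzero, so at most $d$ values of $s(\theta)$ are compatible with any given $\bff(\theta)$, as claimed.

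The main obstacle is (3), where the non-existence of any algebraic witness must be converted into non-existence of any set-theoretic identification map. I would attack this by contrapositive and dimension theory. Suppose $s$ is generically identifiable via some $\Phi$ on a dense open $U\subseteq\Theta$. Then $\tilde{\bff}(U)$ coincides with the graph of $\Phi$ over $\bff(U)$, so the coordinate projection $\pi:\rr^{n+1}\to\rr^n$ restricts to a bijection $\tilde{\bff}(U)\to\bff(U)$. Passing to Zariski closures, the affine varieties $\tilde V$ and $V$ cut out by $\ci(\tilde{\bff}(\Theta))$ and $\ci(\bff(\Theta))$ satisfy $\dim\tilde V=\dim V$, since $\tilde{\bff}(U)$ is Zariski dense in $\tilde V$ and maps one-to-one into $V$ through a polynomial map (apply the fiber-dimension theorem). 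Algebraically this forces the residue class of $q$ in $\rr[q,\bfp]/\ci(\tilde{\bff}(\Theta))$ to be algebraic over the function field of $V$, and clearing denominators in its minimal polynomial produces exactly a $g\in\ci(\tilde{\bff}(\Theta))$ with $g_d\notin\ci(\bff(\Theta))$, contradicting the hypothesis of (3). The delicate step is showing that a purely set-theoretic $\Phi$ nevertheless forces algebraic structure on $\tilde V$; this hinges on the fact that $\tilde{\bff}(U)$, although only constructible, has the same dimension as $\bff(U)$ and remains Zariski dense in $\tilde V$ because $U$ is full-dimensional.
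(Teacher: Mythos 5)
Your treatment of parts (1) and (2) coincides with the paper's: both pass to the dense open set where $g_d\circ\bff$ is nonvanishing and read off $s(\theta)$ as a root of the resulting nondegenerate univariate polynomial, with the unique solution in the linear case. Part (3) is where you genuinely diverge. The paper argues pointwise: it fixes $\bfp\in\bff(U)$, evaluates every element of $\ci(\tilde{\bff}(\Theta))$ at $\bfp$ to obtain an ideal $I\subseteq\rr[q]$, asserts that $I=\langle 0\rangle$ would leave every value of $s$ ``compatible'' with $\bfp$ (contradicting identifiability since $s$ is nonconstant), and then observes that if the leading coefficient of every element of $\ci(\tilde{\bff}(\Theta))$ lay in $\ci(\bff(\Theta))$ then every coefficient would, forcing $I=\langle 0\rangle$. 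Your route instead goes through transcendence degree: generic identifiability makes $\tilde{\bff}(U)$ the graph of a map over $\bff(U)$, hence $\dim\tilde V=\dim V$, hence the class of $q$ is algebraic over the function field of $V$, and its minimal polynomial, cleared of denominators, is the required $g$. Your version is in fact more robust at exactly the paper's soft spot: ``a priori every value is compatible'' conflates values not excluded by polynomial constraints with values actually attained in the fiber, and the dimension count is precisely what legitimizes that inference. Two technical cautions for your write-up. First, $U$ is only assumed to be a dense open subset of $\Theta$ and need not be semialgebraic, so before invoking any dimension theorem you should replace $U$ by an open ball $B\subseteq U$; this costs nothing, since $B$ is still full-dimensional, so $\tilde{\bff}(B)$ remains Zariski dense in $\tilde V$ while still projecting injectively onto $\bff(B)$. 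Second, since everything lives over $\rr$, the right tool is semialgebraic dimension theory via Tarski--Seidenberg (an injective semialgebraic map preserves dimension, and the dimension of a semialgebraic set equals that of its Zariski closure), rather than the fiber-dimension theorem for constructible sets over $\cc$; with that substitution your argument closes.
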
  

\begin{proof}
The ideal $\ci(\tilde{\bff}(\Theta))$ consists of all polynomials $g(q,\bfp)$ in $q, p_1, \ldots, p_n$ such that $g(s(\theta), \bff(\theta)) = 0$ for all $\theta \in \Theta$.  Suppose that there exists a polynomial $g$ satisfying the conditions of the theorem.  Since $g_d(\bfp)$ does not belong to $\ci(\bff(\Theta))$ there exist a dense open subset $U$ of $\Theta$ such that none of the $g_i(\bff)$ are zero.  Letting $\theta \in U$, we see that $s(\theta)$ is one of the solutions to the nondegenerate equation $g(q, \bff(\theta)) = 0$.  If $g$ is linear in $q$ this equation has a unique solution, and hence $s(\theta)$ is generically identifiable.  If $g_1(\bfp) \neq 0$ for all $\bfp \in \bff(\Theta)$ then we can take $U = \Theta$ and $s(\theta)$ is identifiable.

If $g$ is the lowest degree polynomial in $\ci(\tilde{\bff}(\Theta))$ satisfying the desired properties, and $g$ is not linear, then $s(\theta)$ will be one of the $d$ complex solutions to the equation $g(q,\bff(\theta)) = 0$.  This may or may not (generically) identify $s(\theta)$, depending on additional constraints on $\Theta$.  For instance, if $g(q, \bfp)$ has the form 
$$g(q, \bfp)  =  g_d(\bfp) q^d -  g_0(\bfp)$$
and we know that $s(\theta)> 0$, then $s(\theta)$ will be generically identified.  On the other hand, we will see an example in the next section due to Brito, where $d = 2$ and the parameter is not identified.

On the other hand, suppose that $s(\theta)$ is generically identifiable on $U \subseteq \Theta$.  Let $\bfp \in \bff(U)$.  Let $I \subseteq \rr[q]$ be the ideal generated by the evaluations of all polynomials $g \in \ci(\tilde{\bff}(\Theta))$ at the point $\bfp$.  Since $\rr[q]$ is a principal ideal domain, it has a single generator.  If this generator is the zero polynomial, then a priori, every value of $s(\theta)$ in $s(\Theta)$ is compatible with $\bfp$, but this contradicts identifiability because $s$ is not constant, $s(\Theta)$ has more than one point.  This implies that $I$ contains a nonzero polynomial.  This implies that $\ci(\tilde{\bff}(\Theta))$ must have contained a polynomial $g$ with nonzero degree $d$ in $q$ with leading coefficient $g_{d}(\bfp) \notin \ci(\bff(\Theta))$, for if the leading coefficient of every polynomial in $\ci(\tilde{\bff}(\Theta))$ is in $\ci(\bff(\Theta))$ then every coefficient of every polynomial in  $\ci(\tilde{\bff}(\Theta))$ is in $\ci(\bff(\Theta))$.  This implies that $I = \langle 0 \rangle$, which is a contradiction.
\end{proof}

If the integer $d>0$ is the lowest nonzero degree in $q$ of any polynomial in $\ci(\tilde{\bff}(\Theta))$, then there are $d$ complex values for the parameter $s(\theta)$ that are compatible with $\bff(\theta)$.  We call such a parameter \emph{algebraically $d$-identifiable}.  As mentioned in Proposition \ref{prop:identpoly}, a parameter that is algebraically $d$-identifiable may or might not be identifiable.  A parameter is \emph{$d$-identifiable} if there are $d$ different $\theta' \in \Theta$ such that $\bff(\theta')  = \bff(\theta)$ and $s(\theta')$ are all distinct.  See \citep{Allman2009} for an example of a model in phylogenetics that is algebraically $12$-identifiable but is (conjecturally) only $8$-identifiable, at worst.

The existence or nonexistence of a polynomial $g$ satisfying the conditions of Proposition \ref{prop:identpoly} can be decided by a Gr\"obner basis computation, which we now explain.  Basic information about Gr\"obner basis can be found in \citep*{Cox07}.

A term order $\prec$ on the polynomial ring $\rr[\bfp]$ is a total ordering on the monomials in $\rr[\bfp]$ that is compatible with multiplication and such that $1$ is the smallest monomial; that is, $1 = \bfp^{\bf 0} \preceq \bfp^\bfu$ for all $\bfu \in \nn^n$ and if $\bfp^\bfu \preceq  \bfp^\bfv$ then $\bfp^\bfw \cdot \bfp^\bfu \preceq  \bfp^\bfw\cdot \bfp^\bfv$.  Since $\prec$ is a total ordering, every polynomial $g \in \rr[\bfp]$ has a well-defined largest monomial.  Let ${\rm in}_\prec(g)$ be the largest monomial appearing in $g$.  For an ideal $I \subseteq \rr[\bfp]$ let ${\rm in}_\prec(I) =  \langle {\rm in}_\prec(g) :  g \in I \rangle$.  This is called the initial ideal of $I$.  A finite subset $\cg \subseteq I$ is called a Gr\"obner basis for $I$ with respect to the term order $\prec$ if ${\rm in}_\prec(I)  =  \langle {\rm in}_\prec(g) : g \in \cg \rangle$.  The Gr\"obner basis is called reduced if the coefficient of ${\rm in}_\prec(g)$ in $g$ is one for all $g$, each ${\rm in}_\prec(g)$ is a minimal generator of ${\rm in}_\prec(I)$, and no terms besides the initial terms of $\cg$ belong to ${\rm in}_\prec(I)$.  For a fixed ideal $I$ and term order $\prec$, the reduced Gr\"obner basis of $I$ with respect to $\prec$ is uniquely determined.  Note, however, that as the term order varies, the reduced Gr\"obner basis of $I$ will also change.

Among the most important term orders is the lexicographic term order, which can be defined for any permutation of the variables.  In the lexicographic term order we declare $\bfp^\bfu \prec \bfp^\bfv$ if and only if the left most nonzero entry of $\bfv - \bfu$ is positive.  Stated colloquially, this is the term order that makes $p_1$ so expensive its degree dominates the term order.  If two monomials have the same degree in $p_1$, then we compare the degrees of $p_2$, and so on.  Generalizing the lexicographic order are the elimination orders.  These are obtained by splitting the variables into a partition $A \cup B$.  In the elimination order $\bfp^\bfu \prec \bfp^\bfv$ if $\bfp^\bfv$ has larger degree in the $A$ variables than $\bfp^\bfu$.  If $\bfp^\bfv$ and $\bfp^\bfu$ have the same degree in the $A$ variables, then some other term order is used to break ties.

Computation of Gr\"obner bases is via Buchberger's algorithm.  A key ingredient to this algorithm is the division algorithm of multivariate polynomials.  Once a term ordering is fixed, the division algorithm of multivariate polynomials consists in canceling \emph{leading terms} until no term in the remainder can be divided by the leading term of the divisor much in the same way as the familiar division algorithm for univariate polynomials.  The following proposition provides an algebraic procedure for
the identification of parameters. 

\begin{prop}
Let $\prec$ be an elimination term order with respect to the partition $\{q\} \cup \{p_1, \ldots, p_n\}$.  Let $\cg = \{g_1, \ldots, g_n \}$ be a reduced Gr\"obner basis for $\ci(\tilde{\bff}(\Theta))$ with respect to the term order $\prec$.  The Gr\"obner basis $\cg$ contains polynomials of the lowest nonzero degree of the form from Proposition \ref{prop:identpoly}, if such a polynomial exists.  In particular, if no polynomial of $\cg$ contains the indeterminate $q$, then $s$ is not generically identifiable.
\end{prop}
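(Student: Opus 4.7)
The plan is to combine two standard features of reduced Gr\"obner bases with elimination orders. First, the elimination property ensures that $\cg \cap \rr[\bfp]$ is itself a reduced Gr\"obner basis for $\ci(\tilde{\bff}(\Theta)) \cap \rr[\bfp]$, which equals $\ci(\bff(\Theta))$ since $\tilde{\bff}(\Theta)$ projects onto $\bff(\Theta)$ along the $q$-coordinate. Second, whenever one separates $\{q\}$ from $\{p_1,\ldots,p_n\}$ by an elimination order, the leading monomial of any polynomial $h = \sum_{i=0}^{d} h_i(\bfp)\, q^i$ with $h_d\neq 0$ is exactly ${\rm in}_\prec(h_d)\, q^d$. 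The whole argument is driven by these two facts.

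Assuming some polynomial of the form required in Proposition \ref{prop:identpoly} exists in $\ci(\tilde{\bff}(\Theta))$, I would let $d^*$ be the smallest positive $q$-degree achieved by such a polynomial, and choose a witness $g = \sum_{i=0}^{d^*} g_i(\bfp)\, q^i$ with $g_{d^*} \notin \ci(\bff(\Theta))$. Next I would normalize the witness by replacing $g_{d^*}$ with its normal form $r$ modulo $\cg \cap \rr[\bfp]$. The difference $g_{d^*} - r$ lies in $\ci(\bff(\Theta))$, so multiplying it by $q^{d^*}$ and subtracting from $g$ yields a new polynomial still in $\ci(\tilde{\bff}(\Theta))$ whose top $q$-coefficient is $r$. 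This $r$ is nonzero (because $g_{d^*} \notin \ci(\bff(\Theta))$) and, being a normal form, contains no monomials in ${\rm in}_\prec(\ci(\bff(\Theta)))$.

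Now the Gr\"obner basis property supplies some $\tilde{g} \in \cg$ whose leading monomial $\bfp^\bfu q^k$ divides ${\rm in}_\prec(g) = {\rm in}_\prec(r)\, q^{d^*}$. The normalization step rules out $\tilde g \in \cg \cap \rr[\bfp]$, because otherwise $\bfp^\bfu$ would be in ${\rm in}_\prec(\ci(\bff(\Theta)))$ and would divide some monomial of $r$; thus $k \geq 1$, and divisibility forces $k \leq d^*$. The main obstacle, which is the heart of the proof, is to show that in fact $k = d^*$ and that the leading $q$-coefficient $h_k$ of $\tilde{g}$ is not in $\ci(\bff(\Theta))$. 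My plan for this is a descent: if $h_k \in \ci(\bff(\Theta))$, then $\tilde{g} - h_k(\bfp)\, q^k$ lies in $\ci(\tilde{\bff}(\Theta))$ with strictly smaller $q$-degree, so by the minimality of $d^*$ its new leading $q$-coefficient is again in $\ci(\bff(\Theta))$; iterating until the $q$-degree drops to zero (at which point the remaining piece lies in $\ci(\tilde{\bff}(\Theta)) \cap \rr[\bfp] = \ci(\bff(\Theta))$) shows every $q$-coefficient of $\tilde{g}$ lies in $\ci(\bff(\Theta))$. But then $\tilde{g}$ sits in the extended ideal generated by $\cg \cap \rr[\bfp]$ inside $\rr[q,\bfp]$, so its leading monomial $\bfp^\bfu q^k$ is divisible by the leading monomial of some element of $\cg \cap \rr[\bfp]$, contradicting the reduced property of $\cg$. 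Hence $h_k \notin \ci(\bff(\Theta))$, and minimality of $d^*$ forces $k = d^*$, giving the desired element of $\cg$. The ``in particular'' clause then follows by contraposition together with part (3) of Proposition \ref{prop:identpoly}.
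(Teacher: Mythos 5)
Your proposal is correct and follows essentially the same route as the paper's proof: identify $\cg \cap \rr[\bfp]$ as a reduced Gr\"obner basis of $\ci(\bff(\Theta))$ via the elimination property, reduce a minimal-degree witness modulo it so that its leading term survives, and then use the Gr\"obner basis property to locate an element of $\cg$ whose leading monomial involves $q$ to exactly the power $d^*$. The only differences are cosmetic --- you normalize just the top $q$-coefficient where the paper reduces the whole polynomial, and your descent argument spells out why the located basis element has leading $q$-coefficient outside $\ci(\bff(\Theta))$, a point the paper leaves implicit.
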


\begin{proof}
 Let $\cg = \{g_1, \ldots, g_n \}$ be a reduced Gr\"obner basis for $\ci(\tilde{\bff}(\Theta))$ with respect to the elimination order $\prec$.  By virtue of being an elimination order, this set also contains a reduced Gr\"obner basis for $\ci(\bff(\Theta))$.  Let $\cg'$ denote this reduced Gr\"obner basis.  Note that $\cg'  = \cg \cap \rr[\bfp]$ by properties of elimination orders.
 
 Now, let $g$ be a polynomial satisfying the conditions of Proposition \ref{prop:identpoly} of the lowest nonzero degree in $q$.  We can apply the division algorithm by  $\cg'$ to $g$ to get a remainder $\tilde{g}$.  Since the leading coefficient of $g$ in $q$ is not in $\ci(\bff(\Theta))$, this leading coefficient does not reduce to zero by division by $\cg'$.  Thus, $\tilde{g}$ has the same degree as $g$ in $q$.  Now ${\rm in}_\prec(\tilde{g}) \in {\rm in}_\prec(\ci(\tilde{\bff}(\Theta)))$, thus ${\rm in}_\prec(\tilde{g})$ is divisible by some leading term of some polynomial in $\cg$.  But by our reduction assumption it is not divisible by the leading monomial of any element of $\cg'$.  Hence, it must be divisible by some element of $h \in \cg$ whose leading term has a nonzero power of $q$ in it.  Since $g$ has the lowest possible degree in $q$, then so does $\tilde{g}$, and so must $h$, to divide the leading term of $\tilde{g}$.  
 
On the other hand, if no such polynomial exists, then there could not be any $q$ appearing in any elements of the reduced Gr\"obner basis of $\ci(\tilde{\bff}(\Theta))$.
\end{proof}

\begin{ex}
The following Macaulay2 code computes the unique lowest degree polynomial $g(q, \bfp)$ for the problem of determining the identification of the parameter $\lambda_{23}$.  Instead of using indeterminates $p_1, \ldots, p_6$, we use $s11, \ldots, s33$ to match the $f_{11}, \ldots, f_{33}$ in the parametrization.

\begin{verbatim}
S = QQ[w11,w22,w23,w33,l12,l23];
R = QQ[q,s11,s12,s13,s22,s23,s33, 
  MonomialOrder => Eliminate 1];
f = map(S,R,matrix{{
l23,
w11,
w11*l12,
w11*l12*l23,
w22 + w11*l12^2,
w22*l23 + w11*l12^2*l23 + w23,
w33 + w22*l23^2 + w23*l23 
  + w11*l12^2*l23^2}});
kernel f;
\end{verbatim}

The output is the Gr\"obner basis of the ideal $\ci(\tilde{\bff}(\Theta))$ which consists of a single polynomial  $s_{12} q - s_{13}$.
\end{ex}


\section{GAUSSIAN STRUCTURAL EQUATION MODELS}

Let $G = (V, D, B)$ be a graph with vertex set $V$, a set of directed edges $D$, and a set of bidirected edges $B$.  We assume the $V = \{1,2,\ldots, m \}$ and that the subgraph of directed edges is acyclic and topologically ordered (that is, $i \to j \in D$ implies that $i < j$).  Let $PD_n$ denote the set of $m \times m$ symmetric positive definite matrices.  Let $PD(B) :=  \{ \Omega \in PD_m :  \omega_{ij} = 0  \mbox{ if }i \neq j \mbox{ and } i \bi j  \notin B \}$.

The Gaussian structural equation for the graph $G$ is a set of linear relationships between random variables $X_i : i \in V$ induced by the graph and starting with correlated noise terms.  In particular, let $\epsilon$ be a centered $n$-dimensional jointly normal random vector $\epsilon  \sim  \mathcal{N}(0, \Omega)$ such that $\Omega \in PD(B)$.  For each $i \to j \in D$ let $\lambda_{ij}\in \rr$ be a parameter.  For each $j \in V$ define
$$ X_j =  \sum_{i: i \to j \in D} \lambda_{ij} X_i  +  \epsilon_j.$$
The random vector $X$ has a jointly normal distribution with $X \sim \cn(0, \Sigma)$ where
$$
\Sigma  =  (I - \Lambda)^{-T} \Omega (I - \Lambda)^{-1}
$$
where $\Lambda$ is the strictly upper triangular matrix with $\Lambda_{ij} = \lambda_{ij}$ if $i \to j \in D$ and $\Lambda_{ij} = 0$ otherwise.

Said in the language of statistical models and mappings between parameter spaces in the previous section, the Gaussian structural equation model is a map that associates to a parameter vector $(\Lambda, \Omega) \in \rr^{\#D} \times PD(B)$ the normal distribution $\cn(0, \Sigma)$.  

The parameters of most frequent interest in structural equations models are the entries of $\Lambda$ and the entries of $(I - \Lambda)^{-1}$.  The parameter $\lambda_{ij}$ is called the \emph{direct causal effect} of $X_i$ on $X_j$.  The parameter $(I - \Lambda)^{-1}_{ij}$ is called the \emph{total effect} of $X_i$ on $X_j$.  Parameter identification in these structural equation models asks for formulas to recover the direct and total effects given the covariance matrix $\Sigma$.

Note that by basic properties of algebraic graph theory, the entries in $(I - \Lambda)^{-1}$ have a combinatorial interpretation in terms of directed paths in the graph $G$.  A directed path from $i$ to $j$ is a sequence of directed edges $i \to i_1 \to i_2 \to \cdots \to j$.  The set of all paths from $i$ to $j$ is denoted $\cp(i,j)$.  Then
$$(I - \Lambda)^{-1}_{ij}  =  \sum_{P \in \cp(i,j)}  \prod_{k \to l \in P}  \lambda_{kl}.$$
Another type of parameter of occasional interest are the \emph{path specific effects} which come from the monomials $\prod_{k \to l \in P}  \lambda_{kl}$ for $P \in \cp(i,j)$.

If all the $\lambda_{ij}$ parameters are (generically) identifiable, then so too are the entries of $\Omega$.  Indeed, once one can determine all the $\lambda_{ij}$ we can simply determine $\Omega$ by

\begin{equation}\label{omega}
\Omega =  (I - \Lambda)^T \Sigma (I - \Lambda).
\end{equation}

This fact provides another reason why much of the attention is focused on only parameters that involve $\Lambda$ when studying identifiability problems for Gaussian graphical models.

Note that since both the direct effects and total effects are polynomial, we can employ the techniques from the previous section to decide whether or not parameters in the model are identifiable.

\begin{ex}
Consider the graph $G$ in Figure 1.

\begin{figure}[h]\label{fig:instr}
\begin{center}
\resizebox{!}{2cm}{\includegraphics{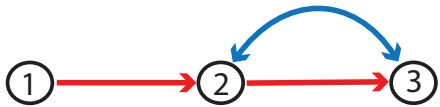}}
\end{center}
\caption{Instrumental Variable}
\end{figure}

This model is traditionally called an instrumental variables model (random variable $X_1$ is the instrument).  The formulas for $\sigma_{ij}$ in terms of $\Lambda$ and $\Omega$ are obtained from  the factorization of $\Sigma = (I - \Lambda)^{-T} \Omega (I - \Lambda)^{-1}$, are simply the $f_{ij}$ from Example \ref{ex:instrument}.  This model is generically identifiable but not identifiable, since it is not possible to identify $\lambda_{23}$ when $\lambda_{12} = 0$.
\end{ex}

\begin{ex}\label{Brito4nodes}
Consider the graph $G$ in Figure 2.
\begin{figure}[h]
\begin{center}
\resizebox{!}{2cm}{\includegraphics{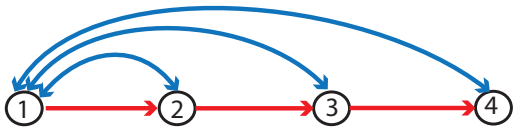}}
\end{center}
\caption{A $2$-identified SEM}
\end{figure}
This example is originally due to Brito and it shows that the ``GAV-criterion'' described in \citep{Brito02b} is not complete.  Using the ideas from the previous section, we compute the elimination polynomials to deduce that every parameter except $\omega_{11}$ is the solution to a quadratic polynomial with coefficients determined by $\Sigma$.  For example, $\lambda_{23}$ is the solution to the quadratic equation:

\small

$$
\begin{array}{lll}
0 & = &
\big(\sigma_{14}\sigma_{22}\sigma_{23}-\sigma_{13}\sigma_{22}\sigma_{24}\big)q^2
 + \big(\sigma_{13}\sigma_{23}\sigma_{24}\\ 
 &  & -\sigma_{14}\sigma_{22}\sigma_{33} -\sigma_{14}\sigma_{23}^2 +\sigma_{12}\sigma_{24}\sigma_{33}+\sigma_{13}\sigma_{22}\sigma_{34} \\ 
 & &   -\sigma_{12}\sigma_{23}\sigma_{34}\big)q + \big(\sigma_{14}\sigma_{23}\sigma_{33}-\sigma_{13}\sigma_{24}\sigma_{33}\big).
 \end{array}
$$

\normalsize

Consider the polynomials for $\lambda_{ij}$.  Since we know there is one real solution for $\Sigma \in \bff(\Theta)$, both solutions for $\lambda_{ij}$ are real.  This means that there are exactly two real lambda matrices compatible with a given $\Sigma$.  Solving for $\Omega$ as
$$
\Omega =  (I - \Lambda)^T \Sigma (I - \Lambda).
$$
with both real choices of $\Lambda$ give two real possibilities for $\Omega$, which hence must be the real roots of the identification polynomials for $\Omega$.  Since $\Sigma$ is positive definite, so must be $\Omega$ in both these cases.  This implies that in this case $\bff$ is a generically 2-to-1 map, that is, the model is $2$-identified.
\end{ex}

\section{COMPUTATIONAL RESULTS}

In this section we present our computational results on the identification of all Gaussian structural equation models on three and four random variables. Using the ideas presented in Section \ref{section:comp-alg}, we computed all generically identifiable parameters for each of the $2^{6} = 64$ models on three variables and each of the $2^{12} = 4096$ models on four variables. 
The parameters identified are the \emph{direct causal effects} (the entries of $\Lambda$), the \emph{total effects} (the entries of $(I-\Lambda)^{-1}$), the \emph{path specific effects}, and the entries of $\Omega$.  The results of these computations are displayed on our project website:

\begin{center}
{\tt http://graphicalmodels.info/}
\end{center}

The website contains all identifiability results in formatted text. It also displays colored pictures encoding identifiability of parameters in $\Lambda$ and $\Omega$. The parameter $\omega_{ii}$ in $\Omega$ is represented by the node labeled $X_{i}$ in the colored graph. An edge or a node is colored green if the associated parameter is generically identified, it is colored blue it is is algebraically $k$-identifiable with $k \geq 2$, otherwise it is colored red.  On a black and white print-out, a green edge is recognized by a circle surrounding the label of the corresponding parameter, a blue edge by an ellipse and a red edge by a rectangle.  For example the graphical model in Figure \ref{web-graph} has generically identified direct effect $\lambda_{23}$ and generically identified  $\Omega$ parameters $\omega_{11}$, $\omega_{24}$ and $\omega_{33}$. 

The colored graph does not encode total effects or path-specific effects, but in this example, the total effect of $X_{2}$ on $X_{4}$, given by the polynomial $\lambda_{23}\lambda_{34}+\lambda_{24}$,  is generically identified as the solution to the equation $$\sigma_{12}q-\sigma_{14} = 0$$ but does not satisfy the back-door criterion. No other total or path-specific effect in this graph is identified.

\begin{figure}[h]
\begin{center}
\includegraphics[totalheight=7cm]{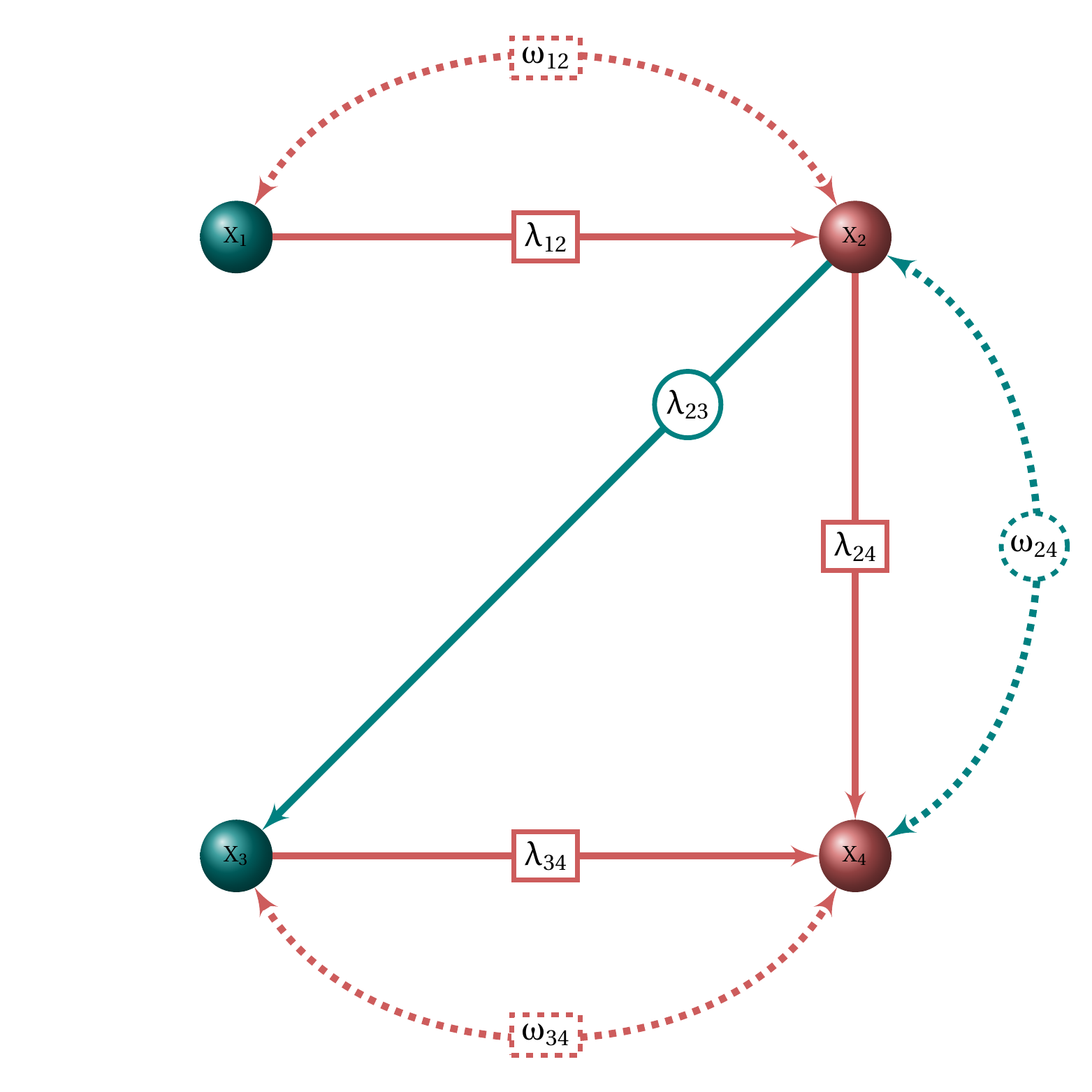}
\end{center}
\caption{Four variable SEM model with total effect of $X_{2}$ on $X_{4}$ generically identifiable.}\label{web-graph}
\end{figure}

Besides the inherent usefulness of solving the (generic) identifiability problem for all models on three and four variables, our main motivation for creating this database is that it can be used to test the efficacy and correctness of current and future graphical criteria for identifiability.  For this reason, we have also computed which direct causal effects are identified by the single-door criterion or instrumental variables, and which total effects are identified by the back-door criterion.  

We have developed a Singular \citep{GPS09} library to perform all the previous computations. The library requires the latest version of Singular.  Its graphing capabilities require some special \LaTeX \, packages and a Mac OS X environment. The library, its documentation and installation instructions can be found on the website. Currently we are also porting this library to Macaulay2 \citep{M2}. Future plans include extending this library to include more graphical criteria.

In the next subsections, we summarize the results of these computations for $3$ and $4$ random variables.

\subsection{THREE RANDOM VARIABLES}

\begin{thm}
Of the $64$ graphs on three vertices,
\begin{itemize}
\item[(i)] there are exactly 31 graphs that are generically identifiable and 33 graphs that are not generically identifiable.

\item[(ii)] The single-door criterion and instrumental variables form a \emph{complete} method to generically identify direct causal effects for SEM models on three variables.
\end{itemize}
\end{thm}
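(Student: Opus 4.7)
The proof is essentially a finite computation, so the plan is to describe the enumeration and the algebraic checks that drive it. First, I would enumerate the $64$ graphs on three vertices: since the directed subgraph is acyclic and topologically ordered, there are at most $\binom{3}{2}=3$ possible directed edges ($1\to 2$, $1\to 3$, $2\to 3$) and at most $3$ possible bidirected edges, giving $2^6=64$ bit-strings indexing the graphs. For each such graph $G$, I would build the parametrization $\bff_G$ from the formula $\Sigma = (I-\Lambda)^{-T}\Omega(I-\Lambda)^{-1}$, obtaining the six polynomials $f_{ij}(\theta)$ in the parameters $(\Lambda,\Omega)$ that generate $\ci(\bff_G(\Theta))$ as in Proposition~\ref{vanishing-ideal}.

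For part (i), I would run the Gr\"obner basis identification test of Proposition~\ref{prop:identpoly} on each nontrivial parameter of $G$. Concretely, for each entry $s=\lambda_{ij}$ and each $s=\omega_{ij}$ appearing in the model, I append $q-s$ to the parametrization, compute a reduced Gr\"obner basis of the extended ideal with respect to an elimination order that eliminates the $t$-variables before $q$, and inspect the lowest-degree polynomial in $q$. The graph $G$ is counted as generically identifiable iff every such $s$ yields a polynomial linear in $q$ (with leading coefficient not in $\ci(\bff_G(\Theta))$). Tallying the outcomes across all $64$ graphs then produces the $31/33$ split. Because (\ref{omega}) lets us recover $\Omega$ from $\Sigma$ and an identified $\Lambda$, it actually suffices to test generic identifiability of the $\lambda_{ij}$, which shortens the computation.

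For part (ii), I would, for each graph $G$ and each direct effect $\lambda_{ij}$ that was certified generically identifiable in part (i), check whether at least one of the two graphical criteria succeeds. The single-door criterion on the edge $i\to j$ requires a set $Z$ of non-descendants of $j$ that $d$-separates $i$ from $j$ in the graph obtained by deleting $i\to j$; the instrumental variables criterion requires the existence of some vertex $k$ (possibly conditional on a set $Z$) serving as an instrument relative to $i\to j$. For three-vertex graphs, the candidate sets $Z$ have size at most $1$, and there are only a handful of candidate instruments $k$, so both checks can be done by direct graph search. The claim in (ii) is that the disjunction of these two criteria fires on exactly the same set of $(G, \lambda_{ij})$ pairs that the Gr\"obner basis test declared generically identifiable.

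The main obstacle is not mathematical but bookkeeping: one must implement the $d$-separation and instrumental-variable tests faithfully, enumerate the conditioning and instrument candidates exhaustively for each edge, and then cross-tabulate against the algebraic output for every one of the $64$ graphs. Since both sides of the comparison in (ii) are decidable by short, finite checks in each graph, verifying agreement on all $64$ cases completes the proof; any single disagreement would disprove (ii), and the computation is small enough that an independent implementation in Singular or Macaulay2 can confirm the counts in (i). The website referenced earlier serves as the explicit certificate of these computations.
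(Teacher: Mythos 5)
Your proposal matches the paper's approach: both reduce the theorem to a finite computation, running the Gr\"obner-basis identification test of Proposition~\ref{prop:identpoly} on every parameter of each of the $64$ graphs and then cross-checking the generically identifiable direct effects against the single-door and instrumental-variables criteria to establish completeness in part (ii). The only difference is organizational: the paper groups the $31$ identifiable graphs as the $27$ bow-free models (whose generic identifiability also follows a priori from Brito's theorem, with the computation confirming that the single-door criterion covers all their direct effects) plus the $4$ instrumental-variable models of Table~\ref{IV}, whereas you treat all $64$ graphs uniformly through the algebraic test.
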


\begin{proof}
There are 27 \emph{bow-free models} on three variables, that is, models satisfying the condition that the errors for variables $i$ and $j$ are uncorrelated if variable $i$ occurs in the structural equation for variable $j$. \citet{Brito04} shows that every bow-free model is generically identified.   Our computations show that all direct causal effect parameters in a bow-free model on three variables are generically identified by the single-door criterion.

Table \ref{IV} lists the four remaining generically identifiable models. Each of these graphs has exactly one direct causal effect parameter which is identified by an instrumental variable but not by the single-door criterion.

\begin{table}[h] 
\caption{SEMs on three variables with one generically identified direct causal effect by an instrumental variable.} 
\label{IV} 
\begin{center} 
\begin{tabular}{ll} 
\multicolumn{1}{c}{\bf Directed edges}  &\multicolumn{1}{c}{\bf Bidirected edges} \\ 
\hline \\ 
$2 \to 3$ & $1 \bi 2,\ 2 \bi 3$ \\  
$1 \to 3$ & $1 \bi 2,\ 1 \bi 3$ \\ 
$1 \to 2$ & $1 \bi 2,\ 1 \bi 3$ \\ 
$1 \to 2,\ 2 \to 3$ & $2 \bi 3$ \\ 
\end{tabular} 
\end{center} 
\end{table} 

The remaining 33 graphs are not generically identifiable. Nevertheless, 13 of these non-identifiable graphs contain at least one identified direct causal effect.  
\end{proof}




The previous theorem describes the identification of direct causal effects. Nevertheless, if a model is not identifiable, Equation \ref{omega} cannot be used to identify the parameters in $\Omega$.  To our knowledge, there are no graphical methods to identify these parameters. So even for this simple model our algebraic approach provides new insights. For example, in the model with directed edges $1 \to 2,\ 2 \to 3$ and bidirected edges $1\bi 2,\ 2 \bi 3$, the parameter $\lambda_{23}$ is generically identified by an instrumental variable but $\lambda_{12}$ is not generically identified. The parameter $\omega_{11}$ is identifiable and $\omega_{23}$, and $\omega_{33}$ are generically identified. The parameter $\omega_{23}$ is generically identifiable by the formula:

$$ \omega_{23} =  \frac{\sigma_{12}\sigma_{23} - \sigma_{13}\sigma_{22}}{\sigma_{12}}.$$


\subsection{FOUR RANDOM VARIABLES}

The case of models on four random variables gets significantly harder. First of all, there are $2^{12} = 4096$ graphs, some of them are not generically identifiable but $2$-identifiable.  In these cases, no graphical criterion can identify these parameters. Example \ref{Brito4nodes} exhibits this behaviour.  Moreover, even when the models are generically identifiable or just some subset of parameters are generically identified, the featured graphical criteria are no longer complete, i.e., there are several SEM models on four variables where the algebraic method is the only tested approach capable of (generically) identifying certain parameters.  Table \ref{identified-by-algebra} lists some examples with this behaviour. It remains to be seen whether the same statement holds when we include even more of the existing graphical criteria. 

\begin{table}[h] 
\caption{Three SEMs on four variables with two generically identified parameters via algebraic methods.}
\label{identified-by-algebra} 
\begin{center} 
\begin{tabular}{ll} 
\multicolumn{1}{c}{\bf Directed edges}  &\multicolumn{1}{c}{\bf Bidirected edges}  \\ 
\hline \\ 
$1 \to 3, 2 \to 4, 3 \to 4$ & $1 \bi 2, 1 \bi 3, 1 \bi 4$ \\
$1 \to 2, 2 \to 3, 2 \to 4$ & $1 \bi 2, 1 \bi 3, 2 \bi 4$ \\ 
$1 \to 2, 1 \to 3, 2 \to 3, 3 \to 4$ & $2 \bi 3, 2 \bi 4$\\ 
\end{tabular} 
\end{center} 
\end{table} 

While most of the models took seconds to be computed, there were a handful of graphs ($< 100$) that took weeks or even months to be identified. While large numbers of edges and  bows is expected in graphs with this anomalous behavior, no other apparent combinatorial description was easily identified. For example, in the model with directed edges $1 \to 2,\ 1 \to 4,\ 2 \to 3,\ 3 \to 4$ and bidirected edges $1 \bi 2,\ 1 \bi 3,\ 1 \bi 4$ the computation to show that $\omega_{44}$  is not identifiable took more that 75 days. 

The following theorem summarizes our findings.

\begin{thm}
Of the $4096$ graphs on four variables
\begin{itemize}
\item[(i)] exactly $1246$ are generically identifiable, $6$ are algebraically $2$-identified, and $2844$ are not generically identifiable.

\item[(ii)] Of the $1246$ generically identifiable models, exactly $1093$ are generically identified by the single-door and instrumental variables criteria and the remaining $153$  generically identified models contain direct causal effect parameters only identified by the algebraic method.

\item[(iii)] There are exactly $729$ bow-free models, each generically identified by the single-door criterion. 
\end{itemize}
\end{thm}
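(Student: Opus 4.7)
The plan is to execute a large, systematic enumeration driven by the algebraic framework of Section~\ref{section:comp-alg}, and then cross-check the outputs against the classical graphical criteria. Because the three assertions of the theorem are exact counts, the ``proof'' is really a computer-assisted verification: I would run one unified procedure on all $4096$ graphs and then post-process the output to extract the numbers $1246$, $6$, $2844$, $1093$, $153$, and $729$.

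First, I would enumerate the models. Fixing the topological order $1<2<3<4$, there are $\binom{4}{2}=6$ possible directed slots and $6$ possible bidirected slots, giving $2^{6}\cdot 2^{6}=4096$ graphs. For each graph $G=(V,D,B)$ I would build the parametrization $\bff$ of Section 3 by symbolically computing $\Sigma=(I-\Lambda)^{-T}\Omega(I-\Lambda)^{-1}$ in the polynomial ring $\qq[\lambda_{ij},\omega_{ij}]$, where the $\lambda_{ij}$ range over $D$ and the $\omega_{ij}$ range over the diagonal together with $B$. Then, for every parameter $s$ of interest, namely each $\lambda_{ij}$, each $\omega_{ij}$, each total effect $(I-\Lambda)^{-1}_{ij}$, and each path-specific monomial, I would apply Proposition~\ref{prop:identpoly}: compute a reduced Gr\"obner basis of $\langle q-s,\,\sigma_{ij}-f_{ij}\rangle$ with respect to an elimination order eliminating the $\lambda$'s, $\omega$'s, and any auxiliary indeterminates first, then read off the lowest-$q$-degree element of $\cg\cap\qq[q,\boldsymbol{\sigma}]$. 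This yields, for every parameter of every graph, either an identification formula (if $\deg_q=1$), an algebraic $d$-identification polynomial (if $\deg_q=d>1$), or the verdict ``no polynomial involves $q$,'' i.e.\ not generically identifiable.

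For part (i), I would declare a graph generically identifiable if every $\lambda_{ij}$ (equivalently, by \eqref{omega}, every parameter) has an identification polynomial that is linear in $q$; I would call it $2$-identified when every parameter has a polynomial of degree at most $2$ and some parameter attains degree $2$, after checking, as in Example~\ref{Brito4nodes}, that the positive-definiteness of $\Omega$ does not cut the number of real preimages below two. Tallying the outputs should yield $1246+6+2844=4096$. For part (ii), I would program the single-door and instrumental-variables tests as graph predicates and, for each of the $1246$ generically identifiable graphs, check whether every edge $i\to j$ is certified by one of these two criteria; the count of graphs where this succeeds should be $1093$, leaving $153$ graphs with at least one $\lambda_{ij}$ identifiable only algebraically. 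Part (iii) is almost purely combinatorial: a graph is bow-free iff for each unordered pair $\{i,j\}$ the slot holds at most one of $i\to j$ or $i\bi j$, giving three choices per pair and $3^{6}=729$ total; by the result of Brito cited in the proof of the three-variable theorem, each is generically identified, and I would re-verify by the single-door criterion for every $\lambda_{ij}$ in each of the $729$ graphs.

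The principal obstacle is not mathematical but computational: elimination Gr\"obner bases over $\qq$ can blow up catastrophically, and as the authors themselves note, a handful of four-variable graphs required weeks or months of computation (the worst case being $\omega_{44}$ in a specific graph, taking over 75 days). To make the enumeration tractable I would exploit symmetry by processing graphs up to the automorphism group of the vertex set compatible with the topological order, reuse partial Gr\"obner bases across parameters of the same graph, work modulo several large primes to confirm dimensions and then lift to $\qq$, and cache intermediate eliminations. The correctness of the final counts then reduces to trusting the Gr\"obner basis engine (here Singular~\citep{GPS09}), which is the standard posture for theorems of this computer-verified kind.
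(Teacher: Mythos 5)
Your proposal matches the paper's approach: the theorem is established by exactly the kind of exhaustive computer-assisted enumeration you describe---running the elimination-ideal/Gr\"obner basis machinery of Section~\ref{section:comp-alg} (via Proposition~\ref{prop:identpoly}) on all $2^{12}=4096$ graphs, classifying each parameter by the lowest $q$-degree of its identification polynomial, and cross-checking against the single-door, instrumental-variables, and bow-free criteria (the $3^6=729$ count included). The paper offers no separate formal proof beyond this computational methodology, so your write-up, including the caveats about Gr\"obner basis blow-up and trusting the Singular engine, is essentially the paper's own argument.
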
   

Table \ref{2identified} lists the $6$ algebraically $2$-identified SEM models on four variables and the time (in seconds) to perform all computations.

\begin{table}[h] 
\caption{Algebraically $2$-identified SEMs on four variables.}
\label{2identified} 
\begin{center} 
\begin{tabular}{llc} 
\multicolumn{1}{c}{\bf Directed edges}  &\multicolumn{1}{c}{\bf Bidirected edges}   &\multicolumn{1}{c}{\bf Time}   \\ 
\hline \\ 
$1 \to 2, 2 \to 3, 3 \to 4$ & $1 \bi 2, 1 \bi 3, 1 \bi 4$ & 4.5 \\ 
$1 \to 2, 2 \to 3, 2 \to 4$ & $1 \bi 2, 1 \bi 3, 1 \bi 4$ & 0.7\\ 
$1 \to 2, 1 \to 4, 2 \to 3$ & $1 \bi 2, 1 \bi 3, 1 \bi 4$ & 0.6\\ 
$1 \to 2, 1 \to 3, 3 \to 4$ & $1 \bi 2, 1 \bi 3, 1 \bi 4$ & 1.1\\ 
$1 \to 2, 1 \to 3, 2 \to 4$ & $1 \bi 2, 1 \bi 3, 1 \bi 4$ & 0.9\\ 
$1 \to 2, 1 \to 3, 1 \to 4$ & $1 \bi 2, 1 \bi 3, 1 \bi 4$ & 0.3\\ 
\end{tabular} 
\end{center} 
\end{table} 

The first SEM model in Table \ref{2identified} corresponds to Example \ref{Brito4nodes}. Each of the remaining five models exhibit a similar behavior, $\omega_{11}$ is identified and 
all direct causal effects  are the solution to a  quadratic polynomial with coefficients
determined by $\Sigma$. Nonetheless, in the second model the parameters $\omega_{33}$ and $\omega_{44}$ are generically identifiable and in the last model 
the parameters $\omega_{22}, \omega_{33}$ and  $\omega_{44}$ are generically identifiable.  



\subsection{CONSTRAINT SETS}

As described in Section \ref{section:comp-alg}, the identification of a particular parameter in a model parametrized by $\bff$ requires the computation of the vanishing ideal $\ci(\bff(\Theta))$, or constraint set. 
Our website also displays the results of these vanishing ideal computations, and determines when the ideals $\ci(\bff(\Theta))$ are generated by determinantal constraints (generalizations of conditional independence constraints), using \emph{trek separation} \citep{Sullivant08}.

\begin{thm}
The vanishing ideal of any structural equation model on three variables is determinantal.  Of the $4096$ structural equation model  on four random variables, the vanishing ideals of exactly $33$ are not determinantal.
\end{thm}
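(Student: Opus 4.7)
The plan is to reduce this statement to a finite enumeration of Gröbner basis comparisons, one per graph, and then to execute that enumeration in Singular. For each graph $G = (V,D,B)$ on at most four vertices, two ideals must be computed in the polynomial ring $\rr[\sigma_{ij}]$: the vanishing ideal $\ci(\bff(\Theta))$ of the parametrization from Section 3, and the \emph{trek ideal} $\mathcal{T}_G$ generated by the determinantal rank constraints coming from trek separation \citep{Sullivant08}. The model is by definition determinantal precisely when these two ideals coincide.

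For the first ideal, I would apply Proposition \ref{vanishing-ideal} to the parametrization $\sigma_{ij} = \bigl[(I-\Lambda)^{-T}\Omega(I-\Lambda)^{-1}\bigr]_{ij}$, clearing denominators using $\det(I-\Lambda)=1$, enforcing $\omega_{ij}=0$ whenever $i\neq j$ and $i\bi j\notin B$, and eliminating the $\lambda$ and $\omega$ indeterminates by computing a Gröbner basis with respect to an elimination order. Because $\Theta$ is a full-dimensional, hence Zariski-dense, semi-algebraic subset of $\rr^{\#D}\times\rr^{\#B+m}$, this elimination ideal is the true vanishing ideal of the model.

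For $\mathcal{T}_G$, I would enumerate pairs $A, B \subseteq V$ of equal size $k$, and for each such pair search combinatorially for a trek-separating pair $(C_A, C_B)$ with $|C_A|+|C_B|<k$; the trek separation theorem then yields that every $k\times k$ minor of $\Sigma_{A,B}$ lies in $\ci(\bff(\Theta))$. Collecting all such minors gives generators of $\mathcal{T}_G$, and the inclusion $\mathcal{T}_G\subseteq \ci(\bff(\Theta))$ is automatic. Equality can then be certified by computing a reduced Gröbner basis of $\mathcal{T}_G$ and reducing each generator of $\ci(\bff(\Theta))$ against it; the model is determinantal if and only if every remainder is zero. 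Running this test on all $64$ three-vertex graphs and all $4096$ four-vertex graphs and tallying the failures produces the counts $0$ and $33$ claimed in the theorem.

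The main obstacle is purely computational. The elimination Gröbner basis for $\ci(\bff(\Theta))$ can be prohibitively expensive for four-vertex graphs with many directed and bidirected edges, as already evidenced in Section 4 by the identifiability computations that ran for over 75 days. To make the enumeration tractable, one would compute $\mathcal{T}_G$ first (which is fast, since trek separation is combinatorial) and use it as a stopping criterion: in many cases $\mathcal{T}_G$ is already prime of the expected dimension equal to $\dim \bff(\Theta)$, and then equality with $\ci(\bff(\Theta))$ follows by comparing dimensions without any further elimination. Only the residual graphs, where this shortcut fails, need the full elimination, and the $33$ exceptional four-vertex graphs would be tabulated explicitly on the project website as part of the output.
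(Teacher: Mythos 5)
Your proposal takes essentially the same approach as the paper: the paper offers no written argument for this theorem beyond the preceding remark that the vanishing ideals $\ci(\bff(\Theta))$ were computed and checked against the determinantal constraints arising from trek separation, which is exactly the graph-by-graph elimination-plus-comparison procedure you describe. The extra details you supply (certifying equality by reducing generators of $\ci(\bff(\Theta))$ modulo a Gr\"obner basis of the trek ideal, and the primality/dimension shortcut to avoid expensive eliminations) are sound and consistent with how the paper's computation would have to be carried out and certified.
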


\section{DISCUSSION}

We have described a framework using computational algebra for determining whether or not parameters are identifiable in statistical models.  We used our framework to provide the first systematic study of the identifiability problem for structural equation models.  We have displayed the results of our computations for graphs with three or four random variables on a searchable website.  Developing a general characterization of which parameters for which graphs are, in fact, identifiable remains a major open problem in the theoretical study of structural equation models.

One observation arising from our large scale computational study is that two graphs which combinatorially seem very similar might have drastically different running times when it comes to verifying if the parameters in the model are identifiable.  The longest computations seemed to occur in the proofs of \emph{non}identifiability of some of the parameters.  This phenomenon deserves more careful study.  In particular, we need to address the question of whether or not this has to do with our implementation (for example, if changing the term order might speed up computations) or if some graphs are simply intrinsically more difficult to prove or disprove identifiability.

\subsubsection*{References} 
 
\renewcommand\refname{}
\bibliographystyle{agms}

\end{document}